\documentclass[12pt]{amsart}
\usepackage[utf8]{inputenc}
\usepackage{bm}
\usepackage[dvipsnames,svgnames,x11names]{xcolor}
\usepackage[hyphens]{url}
\usepackage[colorlinks=true,
            linkcolor=Maroon,
            citecolor=blue,
            urlcolor=blue,
            hypertexnames=false,
            linktocpage]{hyperref}
\usepackage{bookmark}
\usepackage{amsmath,thmtools,mathtools,amssymb}
\mathtoolsset{showonlyrefs=true}
\usepackage{graphicx}
\usepackage{tikz}
\usepackage{fancyhdr}
\usepackage{esint}
\usepackage{enumerate}
\usepackage{caption}
\allowdisplaybreaks

\declaretheorem[name=Theorem,numberwithin=section]{thm}

\declaretheorem[name=Lemma,sibling=thm]{lemma}

\numberwithin{equation}{section}

\newcommand{\ti}{\tilde}

\newcommand{\wh}{\widehat}

\newcommand{\ov}{\overline}

\newcommand{\bbR}{\mathbb{R}}

\newcommand{\bbS}{\mathbb{S}}

\newcommand{\Sn}{\mathbb{S}^n}
\newcommand{\op}[1]{\operatorname{#1}}

\newcommand{\al}{\alpha}

\newcommand{\de}{\delta}

\newcommand{\Si}{\Sigma}

\newcommand{\cC}{\mathcal{C}}

\newcommand{\cR}{\mathcal{R}}

\newcommand{\del}{\partial}

\newcommand{\ip}[2]{\langle #1,#2 \rangle}

\newcommand{\eq}[1]{\begin{equation}\begin{alignedat}{2}#1\end{alignedat}\end{equation}}

\newcommand{\Cth}{\cC_\theta}

\begin{document}
\title{Capillary $L_p$-curvature problem}
\author[Y. Hu, M. N. Ivaki]{Yingxiang Hu,  Mohammad N. Ivaki}
\begin{abstract}
We prove a gradient estimate for a class of capillary curvature
equations in the half-space. As an application, we prove the existence of an even, smooth,
strictly convex solution to the even capillary $L_p$-curvature problem for all
$1<p<k+1$ and all contact angles $\theta\in(0,\pi/2)$.
\end{abstract}
\maketitle

\section{Introduction}
\begin{thm}\label{thm:Lp-curvature-problem}
Let $\theta \in (0,\frac{\pi}{2})$, $1 < p < k+1$ and $1 \leq k < n$. Suppose $0 < \phi \in C^{\infty}(\cC_\theta)$ is an even function. Then there exists an even, smooth, strictly convex capillary hypersurface $\Sigma \subset \overline{\bbR^{n+1}_{+}}$ whose $k$-th elementary symmetric function of the principal curvatures $S_k$ is given by
\eq{\label{capillary-CM-problem}
\phi  s^{p-1}S_k\circ\tilde{\nu}^{-1}= 1.
}
\end{thm}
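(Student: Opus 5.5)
We would prove Theorem~\ref{thm:Lp-curvature-problem} by a continuity (or degree) argument in the class of even capillary support functions on $\Cth$, where the only hard input is a priori estimates. First we rewrite \eqref{capillary-CM-problem} as a PDE for the capillary support function $s$ on the spherical cap $\Cth$. Writing $\sigma_{n-k}$ for the elementary symmetric function of the eigenvalues of $A[s]=\nabla^2 s+s\,\mathrm{Id}$, which are the principal radii of curvature, the equation $S_k=1/(\phi s^{p-1})$ becomes
\[
\frac{\sigma_{n}(A[s])}{\sigma_{n-k}(A[s])}=\frac{\phi\, s^{p-1}}{\binom{n}{k}}\quad\text{in }\Cth,
\]
coupled with the Robin-type capillary boundary condition $\nabla_\mu s=\cot\theta\, s$ on $\partial\Cth$. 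This equation is elliptic and concave on the Gårding cone.

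We then deform $\phi_t=(1-t)c+t\phi$, starting from a spherical cap solution at $t=0$. The constant $c$ is chosen so that the capillary cap $\Cth$ itself, suitably scaled, solves the equation. Two things are needed.

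\emph{Openness.} The linearized operator $L=\text{(linearized quotient)}-(p-1)\phi s^{p-2}\binom{n}{k}^{-1}$, with the linearized boundary condition, must be invertible on even functions. Because $p>1$, the zeroth-order term has the favorable sign for $p-1<k$, and a maximum principle argument gives injectivity. The kernel that could arise from translations in the $\bbR^n$ directions consists of odd functions, so evenness rules it out. For $p$ near $k+1$ we would use instead a spectral argument in the spirit of the Heintze--Karcher/Minkowski inequality for capillary hypersurfaces.

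\emph{Closedness.} This requires uniform $C^0$, $C^1$ and $C^2$ bounds, together with strict convexity, along the path.

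For the $C^0$ bounds, the upper bound follows at a maximum point of $s$. There the capillary boundary condition (with $\cot\theta>0$) forces the maximum into the interior or handles it via the Hopf lemma, and $A[s]\le s\,\mathrm{Id}$ there gives $s^{k}\gtrsim \phi s^{p-1}$, hence $\max s\le C$ since $p<k+1$. The lower bound $\min s\ge c>0$ is where evenness enters. For an even convex capillary body, $s\ge c\,\max s$ by a John-type comparison, which prevents the origin from approaching the boundary. A positive lower bound on $\max s$ follows from the same equation at a minimum point, using $p<k+1$ again.

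The gradient estimate is the result advertised in the abstract and is the one we invoke next. It says $|\nabla s|$ is controlled by $\max s$ and $\min s$ for this class of capillary curvature equations, once the $C^0$ bounds are established.

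For the $C^2$ estimate we bound $\max \lambda_{\max}(A[s])$ by a maximum principle applied to $\log\lambda_{\max}+\text{(auxiliary terms in } s,|\nabla s|^2)$. Interior points are handled by the concavity of $(\sigma_n/\sigma_{n-k})^{1/k}$. Boundary points are handled using the capillary condition: differentiating $\nabla_\mu s=\cot\theta s$ tangentially controls mixed derivatives, and a test function adapted to the boundary, such as an added multiple of $\langle x,e\rangle$-type functions, controls the double normal derivative.

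A lower bound on the principal radii, i.e. strict convexity, follows from the upper bound combined with the equation, since $\sigma_n/\sigma_{n-k}\ge c$.

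With these estimates, Evans--Krylov and Lieberman's oblique-derivative theory yield $C^{2,\alpha}$ bounds, and Schauder bootstrapping yields smoothness. Closedness follows, and at $t=1$ we obtain the desired even, smooth, strictly convex $\Sigma$.

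We expect the main obstacle to be the boundary $C^2$ estimate, together with the $C^0$ lower bound near $p\to k+1$. If the boundary $C^2$ step resists, we would first try a test function of the form $\log\sigma_1(A)+\beta|\nabla s|^2+\gamma\,\ell$. Here $\ell$ is the height-type function vanishing to second order compatibly with the capillary angle, so that the boundary normal derivative has a sign.
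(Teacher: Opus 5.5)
\textbf{There is a genuine gap: your $C^0$ step fails, and it is exactly where the difficulty of the range $1<p<k+1$ lies.}

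\emph{Upper bound.} At an interior maximum of $s$ you have $\tau^\sharp[s]\le s\,\mathrm{Id}$, hence $\phi s^{p-1}=F(\tau^\sharp[s])\le F(\mathrm{Id})\,s^k$, i.e.\ $(\max s)^{k+1-p}\ge c\min\phi$. Since $k+1-p>0$, this is a \emph{lower} bound on $\max s$ (the paper's $R\ge c_0$), not an upper bound. Symmetrically, the minimum point only gives $\min s\le C$. The left side scales like $\lambda^k$ and the right like $\lambda^{p-1}$ with $p-1<k$, so extremal-point arguments only ever produce these trivial directions.

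\emph{Boundary maxima.} The maximum need not be interior. The condition $\nabla_\mu s=\cot\theta\,s>0$ is compatible with a boundary maximum; the model $\ell$ attains its maximum $\sin^2\theta$ on $\partial\Cth$. There $\nabla^2 s(\mu,\mu)$ has no sign, so one must pass to $u=s/\ell$, which satisfies $\nabla_\mu u=0$.

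\emph{Lower bound.} Evenness yields no inequality $\min s\ge c\max s$. If the base $\Omega=\widehat\Sigma\cap e_{n+1}^\perp$ is long and thin, then $\min s\le H\le\tan\theta\,r_{\mathrm{in}}$ while $\max s\ge\sin\theta\,r_{\mathrm{out}}$. Hence $\min s/\max s$ is arbitrarily small among even capillary bodies (think of a smoothed capsule cut by $x_{n+1}=0$). What must be excluded is precisely this degeneration of $r_{\mathrm{out}}/r_{\mathrm{in}}$, and nothing in your plan does that.

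\textbf{The missing ingredient is the paper's Main Lemma, used in the reverse logical order from your plan.} It is a scale-invariant estimate $|\nabla s|^2/s^\gamma\le C_0(\max s)^{2-\gamma}$ with $0<\gamma<2(p-1)/k$. It is proved with no $C^0$ information and is used to \emph{obtain} the $C^0$ bounds. A gradient bound in terms of $\max s$ and $\min s$, as you formulate it, is automatic for convex bodies ($|\nabla s|\le|X|\le C(\theta)\max s$) and carries no information; the content is the weight $s^{-\gamma}$.

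The estimate comes from the maximum principle for $\Psi=\ell^{2-\gamma}|\nabla u|^2/u^\gamma$. The weight $\ell^{2-\gamma}$ gives $\nabla_\mu\log\Psi=-\gamma\cot\theta<0$ at a boundary maximum for every $\theta\in(0,\pi/2)$, which rules such maxima out. The interior computation closes because $2(p-1)-k\gamma>0$. With evenness, the estimate bounds the eccentricity $r_{\mathrm{out}}/r_{\mathrm{in}}$ of the base. Together with the arguments of \cite{MWW25d}, it then yields the two-sided bounds on $s$ and the $C^2$ estimate simultaneously.

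\textbf{Two smaller points also need repair.}
\begin{enumerate}
\item For $p\neq1$, a scaled cap does not solve the equation with constant $\phi$: $\tau[\ell]=g$, but $s=\lambda\ell$ is non-constant. The path must therefore start at $\phi_0=c\,\ell^{1-p}$.
\item The zeroth-order coefficient $F^{ij}g_{ij}-(p-1)\phi s^{p-2}$ of the linearization has no favorable sign. At the model solution $s=\lambda\ell$ it equals $(k\ell-p+1)\phi s^{p-2}$, which is positive wherever $k\ell>p-1$. So the maximum principle gives no injectivity, and your openness step fails. This is why the paper uses the degree theory of \cite{LLN17} together with the uniqueness result of \cite{GL24} at the initial data, which requires only the a priori estimates along the path.
\end{enumerate}
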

In the closed (without boundary) setting, the case $p=1$ was solved by B. Guan and P. Guan \cite{GG02}. For $p> k+1$, existence was established in \cite{GRW15} by an elliptic method and in \cite{BIS21} via a parabolic approach, while the borderline case $p=k+1$ was treated in \cite{Lee23}. In \cite{HI24}, we resolved the remaining range $1<p<k+1$ by proving a new gradient estimate and combining it with a geometric lemma of Chou--Wang \cite{CW00} to obtain $C^0$ and $C^2$ estimates simultaneously.

\autoref{thm:Lp-curvature-problem} for $p=1$ and for $p\geq  k+1$ was obtained recently in \cite{MWW25c,MWW25d}. Furthermore, in their treatment of the range $1<p<k+1$, see \cite[Thm. 1.4]{MWW25d}, the authors proved existence under an \emph{additional} contact-angle restriction, depending on $p$.

The main new contribution of the present paper is a gradient estimate that removes this extra angle condition, thereby yielding the existence result for all $\theta\in(0,\pi/2)$. Our gradient estimate does not rely on any special structure of $S_k$ beyond its $k$-homogeneity; in particular, it applies to capillary quotient curvature problems, as well as to capillary $L_p$-Christoffel--Minkowski problem \cite{HI25b}, where for the latter problem we previously employed a completely different argument to avoid any $p$-dependent angle restriction (and the argument in \cite{HI25b} does not apply to the problem considered here). See also \cite{HIS25,MWW25dd}. For further results in the case $k=n$ and $p\geq -n-1$, we refer the reader to \cite{HI25,HHI25,MWW25a,MWW25b}.

\section{Background}
Let $\{e_i\}_{i=1}^{n+1}$ denote the standard orthonormal basis of $\bbR^{n+1}$. Set
\[
\bbR^{n+1}_+ := \{x\in\bbR^{n+1}: x_{n+1}>0\},
\quad
\del\bbR^{n+1}_+ := \{x\in\bbR^{n+1}: x_{n+1}=0\}.
\]
For $\theta\in(0,\pi)$, define the spherical cap and its translated version by
\[
\bbS^n_\theta := \{x\in\Sn:\ \ip{x}{e_{n+1}}\ge \cos\theta\},
\quad
\cC_\theta := \bbS^n_\theta-\cos\theta\,e_{n+1}.
\]

A smooth, compact, connected, orientable hypersurface
$\Si\subset\ov{\bbR^{n+1}_+}$ with $\op{int}(\Si)\subset\bbR^{n+1}_+$ and
$\del\Si\subset\del\bbR^{n+1}_+$ is called a \emph{capillary hypersurface with contact angle}
$\theta\in(0,\pi)$ if
$
\ip{\nu}{e_{n+1}}=\cos\theta
$
 on $\partial \Cth$ where $\nu$ denotes the outer unit normal of $\Si$.
 
We call $\Si$ \emph{strictly convex} if the enclosed region $\widehat\Si$ is a convex body (i.e. compact, convex, with non-empty interior) and the second fundamental form of $\Si$ is positive definite. For a strictly convex capillary hypersurface $\Si$, the \emph{capillary Gauss map} is defined by
\[
\ti\nu := \nu-\cos\theta\,e_{n+1}:\Si\to \cC_\theta.
\]
It is a diffeomorphism onto $\cC_\theta$, see \cite[Lem. 2.2]{MWWX25e}.

Let $\Si$ be strictly convex and $\theta$-capillary. The \emph{capillary support function} $s:\cC_\theta\to\bbR$ is defined by
\[
s(\zeta):=\ip{\ti\nu^{-1}(\zeta)}{\zeta+\cos\theta\,e_{n+1}}.
\]
Note that $s$ satisfies
$
\nabla_{\mu}s=\cot\theta \, s.
$

For the model cap $\cC_\theta$, the corresponding capillary support function is
\[
\ell(\zeta):=\sin^2\theta-\cos\theta\,\ip{\zeta}{e_{n+1}}.
\]

On $\cC_\theta$ we write $g$ for the round metric, $\nabla$ for its Levi--Civita connection, and
$\nabla^2$ for the covariant Hessian. For $f\in C^2(\cC_\theta)$ we set
\[
\tau[f]:=\nabla^2 f+f\,g,
\qquad
\tau^\sharp[f]:=g^{-1}\cdot\tau[f],
\]
so that $\tau^\sharp[f]$ is a symmetric endomorphism of $T\cC_\theta$.

Finally, writing $x\in\bbR^{n+1}$ as $x=(x_1,\dots,x_n,x_{n+1})$, let $\cR$ be the reflection
\[
\cR(x_1,\dots,x_n,x_{n+1}):=(-x_1,\dots,-x_n,x_{n+1}).
\]
A function $\varphi:\cC_\theta\to\bbR$ is called \emph{even} if $\varphi\circ\cR=\varphi$, and we
say that a capillary hypersurface $\Si$ (equivalently, its capillary support function $s$) is
\emph{even} if
\[
x\in\Si  \implies \cR(x)\in\Si.
\]

\section{The gradient estimate}
A key ingredient in the proof of \autoref{thm:Lp-curvature-problem} is the following gradient estimate in \autoref{lem: grad estimate}. Before stating the lemma and its proof, let us note that the auxiliary function
\[
\Phi=\frac{|\nabla u|^2}{u^{\gamma}}, \qquad u=\frac{s}{\ell},
\]
which is the natural capillary analogue of the test function used in \cite{HI24}, was employed in \cite{MWW25d} to obtain a gradient estimate. In the capillary setting, however, this choice turns out to be insufficient: it leads to a contact-angle restriction depending on $p$ when $1<p<k+1$. There are several natural candidates one might try in order to obtain a full gradient estimate. The main observation of the present paper is that the angle obstruction disappears once one incorporates the weight $\ell^{2-\gamma }$, namely by considering
\eq{\label{eq: defn-auxillary}
\Psi= \frac{\ell^{2-\gamma}|\nabla u|^2}{u^{\gamma}}.
}
In particular, when $\theta=\pi/2$, we have $\ell\equiv 1$ and hence $\Psi=|\nabla s|^2/s^{\gamma}$, which is precisely the auxiliary function used in \cite{HI24}.

\begin{lemma}[Main Lemma]\label{lem: grad estimate}
Let $1<p<k+1$, $\theta\in (0,\frac{\pi}{2})$ and $0<\gamma<\frac{2(p-1)}{k}$. Suppose $s$ is an even, smooth, strictly convex solution of
\eq{\label{s3:F-eq}
\left\{
\begin{aligned}
  F(\tau^\sharp[s])&=s^{p-1}\phi
    &&\text{in }\Cth,\\
  \nabla_\mu s &= \cot\theta\,s
    &&\text{on }\partial\Cth,
\end{aligned}
\right.
}
where $F$ is a symmetric, $k$-homogeneous, smooth curvature function. Then there exists $C_0=C_0(\theta,k,p,\gamma,\phi)$ such that
\eq{\label{s3:gradient-estimate}
\frac{|\nabla s|^2}{s^{\gamma}}\leq C_0 \left(\max_{\cC_\theta} s\right)^{2-\gamma}.
}
\end{lemma}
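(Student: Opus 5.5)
We will apply the maximum principle to the auxiliary function $\Psi=\ell^{2-\gamma}|\nabla u|^2u^{-\gamma}$ from \eqref{eq: defn-auxillary}, where $u=s/\ell$. Since $\ell$ is bounded above and below by positive constants depending only on $\theta$, and $\nabla s=\ell\nabla u+u\nabla\ell$ with $|\nabla\ell|\le C(\theta)$, a bound $\Psi\le C(\max s)^{2-\gamma}$ implies \eqref{s3:gradient-estimate}. The extra term $u|\nabla\ell|$ is controlled because $u^2 s^{-\gamma}\le C s^{2-\gamma}\le C(\max s)^{2-\gamma}$, using $\gamma<2$, which follows from $\gamma<2(p-1)/k<2$.

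\emph{Boundary case.} First we exclude a maximum on $\partial\Cth$, or handle it there. The choice $u=s/\ell$ matters here: $\ell$ satisfies the same Robin condition $\nabla_\mu\ell=\cot\theta\,\ell$, since $\ell$ is the capillary support function of $\Cth$. Hence $\nabla_\mu u=0$ on $\partial\Cth$. We will compute $\nabla_\mu\Psi$ at a boundary point. Using $\nabla_\mu u=0$ and the fact that $\partial\Cth$ is a round sphere of geodesic curvature $\cot\theta$ inside $\bbS^n$, we get $\nabla_\mu|\nabla u|^2=2\nabla^2u(\nabla u,\mu)=-2\cot\theta\,|\nabla u|^2$, from the second fundamental form of $\partial\Cth$ with the appropriate sign. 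Together with $\nabla_\mu\ell^{2-\gamma}=(2-\gamma)\cot\theta\,\ell^{2-\gamma}$, this gives $\nabla_\mu\Psi=\cot\theta\,(-2+2-\gamma)\Psi<0$ up to sign conventions. The weight $\ell^{2-\gamma}$ is plausibly chosen exactly so that this boundary derivative has a favourable sign for every $\theta$. By Hopf's lemma the maximum is then not attained on the boundary. We will check the sign carefully; if it comes out wrong, we will adjust the power of $\ell$ and recheck it against the interior computation.

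\emph{Interior case.} At an interior maximum point $x_0$ we have $\nabla\Psi=0$ and $F^{ij}\nabla_{ij}\Psi\le0$, with $F^{ij}=\partial F/\partial\tau_{ij}$ elliptic. We rewrite the equation in terms of $u$: $\tau[s]=\ell\nabla^2u+2\nabla\ell\otimes\nabla u+u\,\tau[\ell]$, where $\tau[\ell]=\sin^2\theta\,g$ since the linear part of $\ell$ is annihilated by $\tau$. We will rotate the frame so that $\nabla u=|\nabla u|e_1$. The critical point condition then gives $\nabla_{11}u$ and $\nabla_{1j}u$ in terms of $u$, $\ell$ and $\nabla\ell$; in particular $\nabla_{11}u\approx\frac{\gamma}{2}|\nabla u|^2/u$ plus lower order terms.

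Next we differentiate the equation $F(\tau^\sharp[s])=s^{p-1}\phi$ along $e_1$ and commute covariant derivatives using the round curvature. Homogeneity gives $F^{ij}\tau_{ij}=kF$. Substituting into $F^{ij}\nabla_{ij}\Psi\le0$, the leading terms should combine into
$$\Big(\gamma-\tfrac{2(p-1)}{k}\Big)\frac{|\nabla u|^2}{u}\,kF+\text{good terms}\ \ge\ \text{lower order},$$
and the coefficient is negative exactly when $\gamma<2(p-1)/k$, as in \cite{HI24}. The remaining terms come from $\nabla\ell$ and from $\nabla\phi$. We will bound them by $\epsilon|\nabla u|^2/u\cdot F+C u F$ or absorb them into the positive term $F^{ij}\nabla_{i1}u\nabla_{j1}u$. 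This yields $|\nabla u|^2\le Cu^2$ at $x_0$, so $\Psi(x_0)\le Cu^{2-\gamma}\le C(\max s)^{2-\gamma}$.

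Evenness is used to ensure that $s$ does not degenerate badly, for instance that the origin is interior in a suitable sense. We expect the main obstacle to be the interior cross terms involving $\nabla\ell$, since these are what forced the angle restriction in \cite{MWW25d}. We will try to show that, with the $\ell^{2-\gamma}$ weight, these terms cancel exactly at first order, by writing $\ell^{2-\gamma}u^{-\gamma}=\ell^2 s^{-\gamma}$ and expanding in terms of $s$ directly.
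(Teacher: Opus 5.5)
\textbf{Verdict: the interior case, which is the heart of the lemma, has a genuine gap.} The boundary case and the reduction are correct.

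\textbf{What is correct.} Your reduction from $\Psi$ to the stated estimate is right. So is your boundary computation, and it agrees with the paper. With $\nabla_\mu u=0$, $\nabla^2u(\nabla u,\mu)=-\cot\theta\,|\nabla u|^2$ and $\nabla_\mu\ell=\cot\theta\,\ell$, you get $\nabla_\mu\log\Psi=-\gamma\cot\theta<0$. This contradicts $\nabla_\mu\Psi\ge0$ at a boundary maximum. That is just the first-derivative test; Hopf's lemma is not needed, and it would require a differential inequality for $\Psi$ near $\partial\Cth$ that you have not established.

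\textbf{Two corrections.} First, $\tau[\ell]=g$, not $\sin^2\theta\,g$. Writing $x=\zeta+\cos\theta\,e_{n+1}\in\Sn$, one has $\ell=1-\cos\theta\,\langle x,e_{n+1}\rangle$. So $\langle\zeta,e_{n+1}\rangle$ is a linear function plus the constant $-\cos\theta$, and $\tau[1]=g$. Second, the boundary does not explain the weight. For any $\ell^\beta$ with $\beta<2$ you get $\nabla_\mu\log\Psi=(\beta-2)\cot\theta<0$. This includes $\beta=0$, the unweighted function of \cite{MWW25d}. The exponent $\beta=2-\gamma$ is dictated by the interior.

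\textbf{The gap in the interior.} You plan to bound the $\nabla\ell$-terms by $\epsilon\frac{|\nabla u|^2}{u}F+CuF$, or absorb them into $F^{ij}u_{1i}u_{1j}$. This fails as stated. After dividing by $|\nabla u|^2$, the $\nabla\ell$-contributions are $O(1)$ multiples of $\operatorname{tr}F^{ij}$, $F^{ij}\ell_i\ell_j/\ell^2$ and $F^{ij}u_i\ell_j/(u\ell)$. These are of the same order as the good terms, and they are not controlled by $F$ without $C^2$ bounds. Only the $\nabla\phi$-term and the $(p-1-k)\ell_mu_m$-term are $O(M^{-1/2})F/s$.

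The paper instead tracks these coefficients exactly for a general power $\ell^\beta$. It uses $\tau[\ell]=g$, the Codazzi commutation, the differentiated equation contracted with $u_m/\ell$, and the bound $\frac{2}{|\nabla u|^2}F^{ij}u_{mi}u_{mj}\ge\frac12F^{ii}\big(\gamma\frac{u_i}{u}-\beta\frac{\ell_i}{\ell}\big)^2$ from the critical-point equation. At $\xi_0$, in a frame diagonalizing $F^{ij}$ and up to $O(M^{-1/2})$ errors, this gives
\[
0\ge\big(2(p-1)-k\gamma\big)\frac{F}{s}+\Big((2-\beta)\big(1-\tfrac1\ell\big)+\tfrac{\gamma}{\ell}\Big)\sum_iF^{ii}-\beta\gamma\sum_iF^{ii}\frac{|u_i||\ell_i|}{u\ell}+\Big(\beta-\tfrac{\beta^2}{2}\Big)\sum_iF^{ii}\frac{\ell_i^2}{\ell^2}+\Big(\gamma-\tfrac{\gamma^2}{2}\Big)\sum_iF^{ii}\frac{u_i^2}{u^2}.
\]

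Since $1-\cos\theta\le\ell\le\sin^2\theta<1$, the choice of $\beta$ matters here:
\begin{enumerate}
\item For $\beta=0$, the trace coefficient $2-\frac{2-\gamma}{\ell}$ is negative where $\ell<1-\frac{\gamma}{2}$. So the unweighted function closes only under an angle condition such as $\cos\theta<\frac{\gamma}{2}<\frac{p-1}{k}$.
\item For $\beta=2-\gamma$, the trace coefficient is exactly $\gamma$, independent of $\ell$. The last three terms become $(\gamma-\frac{\gamma^2}{2})\sum_iF^{ii}\big(\frac{|u_i|}{u}-\frac{|\ell_i|}{\ell}\big)^2\ge0$.
\end{enumerate}
In the second case, the $O(M^{-1/2})$ errors, including those multiplying $F^{ii}|u_i||\ell_i|/(u\ell)$ and $F^{ii}u_i^2/u^2$, are absorbed into $\frac{\gamma}{2}\sum_iF^{ii}$. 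This uses an $\varepsilon$-Cauchy--Schwarz and $|\nabla\ell|\le c\,\ell$. What remains is a strictly positive right-hand side, a contradiction.

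So the $\nabla\ell$-terms do not ``cancel at first order''. They assemble into a positive trace term plus a square, and this happens only because of the specific exponent $2-\gamma$. Your proposal does not identify this mechanism. Without it, your interior step remains unproved.
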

\begin{proof}
Let $u=s/\ell$, $0<\gamma<2(p-1)/k<2$ and $\beta=2-\gamma$. Consider the auxiliary function  $\Psi$ defined in \eqref{eq: defn-auxillary}.  To prove \eqref{s3:gradient-estimate}, it suffices to prove
\eq{\label{s3:grad-estimate-equivalent}
\frac{\ell^{\beta}|\nabla u|^2}{u^{\gamma}}
\leq
M \left(\max_{\cC_\theta}\ell\right)^\beta\left(\max_{\cC_\theta} u\right)^{2-\gamma}
}
for some constant $M>0$.
Assume that $\Psi$ attains its maximum at $\xi_0\in\cC_\theta$ and that
\eqref{s3:grad-estimate-equivalent} does not hold for a given $M>0$; that is,
\eq{
\frac{\ell^{\beta}|\nabla u|^2}{u^\gamma}(\xi_0)
=\max_{\xi\in \cC_\theta} \frac{\ell^{\beta}|\nabla u|^2}{u^\gamma}(\xi)> M \left(\max_{\cC_\theta}\ell\right)^\beta\left(\max_{\cC_\theta} u\right)^{2-\gamma} .
}
Then
\eq{
\frac{|\nabla u|^2(\xi_0)}{u^2(\xi_0)}> M.
}

In what follows, $c$ denotes a positive constant that may change from line to line, and depends only on $\theta,k,p,\gamma$ and $\phi$.

{\bf Case 1.} Suppose $\xi_0\in \del \cC_\theta$. Let $\{e_i\}_{i=1,\ldots,n}$ be an orthonormal frame around $\xi_0$ such that $e_n=\mu$. We use the shorthand notation $u_i:=\nabla_{e_i}u$ and $u_{ij}:=\nabla^2u(e_i,e_j)$, and similarly for higher covariant derivatives.
We have $\nabla_\mu u=u_n=0$ and
\eq{
\nabla^2 u(e_\al,e_n)=e_{\al}(u_n)-d u(\nabla_{e_\al}e_n)=-\cot\theta\, u_\al.
}
Since $\beta=2-\gamma<2$ and $\theta\in (0,\pi/2)$, at $\xi_0$ we obtain
\eq{
0\leq \nabla_\mu \log\left(\frac{\ell^\beta|\nabla u|^2}{u^{\gamma}}\right)
&=\frac{\nabla_\mu |\nabla u|^2}{|\nabla u|^2}+\beta\frac{\nabla_\mu\ell}{\ell}-\gamma \frac{\nabla_\mu u}{u}= -\gamma \cot\theta<0,
}
which is a contradiction.

{\bf Case 2.} Suppose $\xi_0\in \cC_\theta\backslash \del\cC_\theta$. At $\xi_0$, we have
\eq{\label{grad zero}
0=\nabla_i \log \Psi
&=\frac{\nabla_i(|\nabla u|^2)}{|\nabla u|^2}+\beta\frac{\ell_i}{\ell}-\gamma \frac{u_i}{u}\\
&=\frac{2u_m u_{mi}}{|\nabla u|^2}+\beta\frac{\ell_i}{\ell}-\gamma \frac{u_i}{u},
}
and
\eq{ \label{s3:F^ij-Psi_ij}
0\geq \nabla^2_{ij}(\log \Psi)
&=\frac{\nabla^2_{ij}(|\nabla u|^2)}{|\nabla u|^2}-\frac{\nabla_i (|\nabla u|^2)\nabla_j (|\nabla u|^2)}{|\nabla u|^4}\\
&\quad +\beta\frac{\ell_{ij}}{\ell}-\beta\frac{\ell_i\ell_j}{\ell^2}-\gamma \frac{u_{ij}}{u}+\gamma \frac{u_iu_j}{u^2}\\
&= \frac{2u_m u_{mij}+2u_{mi}u_{mj}}{|\nabla u|^2}-(\beta\frac{\ell_i}{\ell}-\gamma\frac{u_i}{u})(\beta\frac{\ell_j}{\ell}-\gamma\frac{u_j}{u})\\
&\quad +\beta\frac{1-\ell}{\ell}\de_{ij}-\beta\frac{\ell_i\ell_j}{\ell^2}-\gamma\frac{ u_{ij}}{u}+\gamma\frac{u_iu_j}{u^2},
}
where we used $\ell_{ij}+\ell \de_{ij}=\de_{ij}$.
Multiplying \eqref{s3:F^ij-Psi_ij} with $F^{ij}$ yields
\eq{ \label{s3:F^ij-u_m-u_{mij}}
0 \geq &~\frac{2}{|\nabla u|^2}(F^{ij}u_m u_{mij}+F^{ij}u_{mi}u_{mj})+\beta\frac{1-\ell}{\ell}F^{ij}\de_{ij}\\
&~+2\beta\gamma F^{ii}\frac{u_i \ell_i}{u\ell}-\gamma F^{ij}\left(\frac{\tau_{ij}}{u\ell}-\frac{2u_i\ell_j}{u \ell}-\frac{1}{\ell}\de_{ij}\right)\\
&~-(\beta+\beta^2)F^{ij}\frac{\ell_i\ell_j}{\ell^2}+(\gamma-\gamma^2)F^{ij}\frac{u_iu_j}{u^2} \\
= &~\frac{2}{|\nabla u|^2}(F^{ij}u_m u_{mij}+F^{ij}u_{mi}u_{mj})+\left(\beta\frac{1-\ell}{\ell}+\gamma\frac{1}{\ell}\right)F^{ij}\de_{ij}\\
&~-\gamma k s^{p-2}\phi+(2\beta\gamma+2\gamma) F^{ii}\frac{u_i \ell_i}{u\ell}-(\beta+\beta^2)F^{ii}\frac{\ell_i^2}{\ell^2}+(\gamma-\gamma^2)F^{ii}\frac{u_i^2}{u^2},
}
where we used
\eq{\label{tau-u}
\tau_{ij}=s_{ij}+s\de_{ij}
&=\ell u_{ij}+u_i \ell_j+u_j \ell_i+u(\ell_{ij}+\ell\de_{ij})\\
&=\ell u_{ij}+u_i \ell_j+u_j \ell_i+u\de_{ij}
}
and the identity
\eq{
F^{ij}(\ell u_{ij}+u_i \ell_j+u_j \ell_i+u\de_{ij})=F^{ij}\tau_{ij}=k F(\tau)= k s^{p-1} \phi.
}

Next, taking the first derivatives of \eqref{s3:F-eq} in the $e_m$-direction gives
\eq{
F^{ij}(\ell_m u_{ij}+\ell u_{ijm}+2u_{im}\ell_j+2u_i\ell_{jm}+u_m\de_{ij})=(p-1)s^{p-2}\phi s_m +s^{p-1}\phi_m.
}
Multiplying this identity by $u_m/\ell$, summing over $m$, and using \eqref{tau-u} and \eqref{grad zero}, we obtain
\eq{
u_mF^{ij} u_{ijm}
=&~\frac{1}{\ell}((p-1)s^{p-2}\phi s_m u_m +s^{p-1}u_m \phi_m)\\
&~-\frac{1}{\ell}F^{ij}(u_m \ell_m u_{ij}+2u_{im}u_m\ell_j+2u_i\ell_{jm}u_m+|\nabla u|^2\de_{ij})\\
=&~(p-1)s^{p-2}\left(|\nabla u|^2+\frac{u}{\ell}\ell_mu_m\right) \phi+s^{p-1}\frac{u_m}{\ell} \phi_m\\
&~-\frac{\ell_m u_m }{\ell^2}(ks^{p-1}\phi-2F^{ii}u_i\ell_i-uF^{ij}\de_{ij})\\
&~-|\nabla u|^2F^{ii}\frac{\ell_i}{\ell} \left(\gamma\frac{u_i}{u}-\beta\frac{\ell_i}{\ell}\right)-\frac{2(1-\ell)}{\ell}F^{ii}u_i^2-\frac{|\nabla u|^2}{\ell} F^{ij}\de_{ij}\\
=&~(p-1)s^{p-2}|\nabla u|^2 \phi+(p-1-k)s^{p-2}\frac{u}{\ell}\ell_m u_m \phi+s^{p-1}\frac{u_m}{\ell} \phi_m\\
&~+\left(\frac{u\ell_m u_m }{\ell^2} -\frac{|\nabla u|^2}{\ell}\right)F^{ij}\de_{ij}+\left(\frac{2\ell_mu_m}{\ell^2}-\gamma\frac{|\nabla u|^2}{u\ell}\right)F^{ii}u_i\ell_i\\
&~+\beta|\nabla u|^2F^{ii}\frac{\ell_i^2}{\ell^2}-\frac{2(1-\ell)}{\ell}F^{ii}u_i^2.
}

Since $\nabla^2u+gu$ is Codazzi, we have
$
u_{mij} =u_{ijm}+u_m \de_{ij}-u_j\de_{mi},
$
and thus
\eq{
F^{ij}u_mu_{mij}
= &~ F^{ij} u_m(u_{ijm}+u_m \de_{ij}-u_j\de_{mi}) \\
=&~F^{ij}u_mu_{ijm}+|\nabla u|^2 F^{ij}\de_{ij}-F^{ii}u_i^2\\
=&~(p-1)s^{p-2}|\nabla u|^2\phi+(p-1-k)s^{p-2}\frac{u}{\ell}\ell_m u_m \phi+s^{p-1}\frac{u_m}{\ell} \phi_m\\
&~+\left(\frac{u\ell_m u_m }{\ell^2} +(1-\frac{1}{\ell})|\nabla u|^2\right)F^{ij}\de_{ij}+\left(\frac{2\ell_mu_m}{\ell^2}-\gamma\frac{|\nabla u|^2}{u\ell}\right)F^{ii}u_i\ell_i\\
&~+\beta|\nabla u|^2F^{ii}\frac{\ell_i^2}{\ell^2}-\frac{2-\ell}{\ell}F^{ii}u_i^2.
}
Therefore, by \eqref{s3:F^ij-u_m-u_{mij}}, we find
\eq{
0 \geq &~\frac{2}{|\nabla u|^2}(F^{ij}u_m u_{mij}+ F^{ii}u_{mi}^2)+\left(\beta\frac{1-\ell}{\ell}+\gamma\frac{1}{\ell}\right)F^{ij}\de_{ij}-\gamma k s^{p-2}\phi\\
&~+(2\beta\gamma+2\gamma) F^{ii}\frac{u_i \ell_i}{u\ell}-(\beta+\beta^2)F^{ii}\frac{\ell_i^2}{\ell^2}+(\gamma-\gamma^2)F^{ii}\frac{u_i^2}{u^2}\\
=&~(2(p-1)-\gamma k)s^{p-2}\phi+2\frac{u_m}{\ell|\nabla u|^2} s^{p-1}\phi_m+2(p-1-k)\frac{\ell_m u_m }{\ell^2|\nabla u|^2}s^{p-1}\phi\\
&~+\left(\frac{2u\ell_m u_m }{\ell^2|\nabla u|^2} +(2-\beta)(1-\frac{1}{\ell})+\gamma\frac{1}{\ell}\right)F^{ij}\de_{ij}+\left(\frac{4u\ell_mu_m}{\ell|\nabla u|^2}+2\beta\gamma\right)F^{ii}\frac{u_i\ell_i}{u\ell}\\
&~+(\beta-\beta^2) F^{ii}\frac{\ell_i^2}{\ell^2}+\frac{2}{|\nabla u|^2}F^{ii}u_{mi}^2+\left(\gamma-\gamma^2-\frac{2(2-\ell)}{\ell}\frac{u^2}{|\nabla u|^2}\right)F^{ii}\frac{u_i^2}{u^2}.
}
Moreover, by \eqref{grad zero} we have
\eq{
\frac{2F^{ii} u_{mi}^2}{|\nabla u|^2}
\geq \frac{F^{ii}}{2}\left(\frac{2u_m u_{mi}}{|\nabla u|^2}\right)^2
=\frac{\gamma^2}{2}F^{ii}\frac{u_i^2}{u^2}
+\frac{\beta^2}{2}F^{ii}\frac{\ell_i^2}{\ell^2}
-\gamma\beta F^{ii}\frac{u_i\ell_i}{u\ell}.
}
Hence,
\eq{
0 \geq &~s^{p-2}\phi\left( 2(p-1)-k\gamma-2(k+1-p)\frac{u|\nabla\ell|}{\ell |\nabla u|}-2\frac{u|\nabla \phi|}{\phi|\nabla u|}\right)\\
&~+\left(-\frac{2u|\nabla\ell|}{\ell^2|\nabla u|} +(2-\beta)(1-\frac{1}{\ell})+\gamma\frac{1}{\ell}\right)F^{ij}\de_{ij}
-\left(\frac{4u|\nabla \ell|}{\ell|\nabla u|}+\beta\gamma\right)F^{ii}\frac{|u_i||\ell_i|}{u\ell}\\
&~+\left(\beta-\frac{\beta^2}{2}\right) F^{ii}\frac{\ell_i^2}{\ell^2}
+\left(\gamma-\frac{\gamma^2}{2}-\frac{2(2-\ell)}{\ell}\frac{u^2}{|\nabla u|^2}\right)F^{ii}\frac{u_i^2}{u^2}\\
\geq &~s^{p-2}\phi\left( 2(p-1)-k\gamma-\frac{c}{\sqrt{M}}\right)
+\left(-\frac{c}{\sqrt{M}} +(2-\beta)(1-\frac{1}{\ell})+\gamma\frac{1}{\ell}\right)F^{ij}\de_{ij}\\
&~-\left(\frac{c}{\sqrt{M}}+\beta\gamma\right) F^{ii}\frac{|u_i||\ell_i|}{u\ell}
+\left(\beta-\frac{\beta^2}{2}\right) F^{ii}\frac{\ell_i^2}{\ell^2}
+\left(\gamma-\frac{\gamma^2}{2}-\frac{c}{M}\right)F^{ii}\frac{u_i^2}{u^2}.
}

Let $\beta=2-\gamma$. Then
\eq{
0\geq &~s^{p-2}\phi\left( 2(p-1)-k\gamma-\frac{c}{\sqrt{M}} \right)
+\left( -\frac{c}{\sqrt{M}}+\gamma\right)F^{ij}\de_{ij}\\
&~-\left(\frac{c}{\sqrt{M}}+2(\gamma-\frac{\gamma^2}{2})\right)F^{ii}\frac{|u_i||\ell_i|}{u\ell}\\
&~+\left(\gamma-\frac{\gamma^2}{2}\right)F^{ii}\frac{\ell_i^2}{\ell^2}
+\left(\gamma-\frac{\gamma^2}{2}-\frac{c}{M}\right)F^{ii}\frac{u_i^2}{u^2}.
}

Let $\varepsilon\in (0,1)$. Choose $M>0$ sufficiently large such that
\begin{equation}
\begin{alignedat}{2}
\frac{c}{\sqrt{M}}&<\frac{1}{2}\left(2(p-1)-k\gamma\right),\quad
&\frac{c}{\sqrt{M}}&<\frac{\gamma}{2},\\
\frac{c}{\sqrt{M}}&<2(\sqrt{1+\varepsilon}-1)\left(\gamma-\frac{\gamma^2}{2}\right),\quad
&\frac{c}{M}&<\varepsilon\left(\gamma-\frac{\gamma^2}{2}\right).
\end{alignedat}
\end{equation}
Using $\ell_i^2\leq |\nabla\ell|^2 \leq c\,\ell^2$ and the Cauchy--Schwarz inequality
\eq{
2\sqrt{1+\varepsilon} F^{ii}\frac{|u_i||\ell_i|}{u\ell}
\leq \frac{1+\varepsilon}{1-\varepsilon} F^{ii}\frac{\ell_i^2}{\ell^2}
+(1-\varepsilon)F^{ii}\frac{u_i^2}{u^2},
}
we obtain
\eq{
0\geq &~\frac{2(p-1)-k\gamma}{2}s^{p-2}\phi+\frac{\gamma}{2}F^{ij}\de_{ij}
-2\sqrt{1+\varepsilon}\left(\gamma-\frac{\gamma^2}{2}\right)F^{ii}\frac{|u_i||\ell_i|}{u\ell}\\
&~+\left(\gamma-\frac{\gamma^2}{2}\right)F^{ii}\frac{\ell_i^2}{\ell^2}
+(1-\varepsilon)\left(\gamma-\frac{\gamma^2}{2}\right)F^{ii}\frac{u_i^2}{u^2}\\
\geq &~\frac{2(p-1)-k\gamma}{2}s^{p-2}\phi+\frac{\gamma}{2}F^{ij}\de_{ij}
-\left(\gamma-\frac{\gamma^2}{2}\right)\frac{2\varepsilon}{1-\varepsilon}F^{ii}\frac{\ell_i^2}{\ell^2}\\
\geq &~\frac{2(p-1)-k\gamma}{2}s^{p-2}\phi+\frac{\gamma}{2}\left(1-\frac{4c\varepsilon}{1-\varepsilon}\right)F^{ij}\de_{ij}.
}
Therefore, if $\varepsilon>0$ is sufficiently small such that $1-\frac{4c\varepsilon}{1-\varepsilon}>0$, then we arrive at a contradiction.
\end{proof}

Regarding the proof of \autoref{thm:Lp-curvature-problem}, the gradient estimate, combined with the arguments of \cite{MWW25d}, yields two-sided bounds for the capillary support function and a uniform $C^2$ estimate simultaneously. Higher-order regularity then follows from \cite{Lie13}. One can therefore apply the degree-theoretic framework of \cite{LLN17}, together with the uniqueness result \cite[Thm. 1.6]{GL24}, to obtain the desired existence result; see \cite{MWW25d} for further details.

Next, we present a new argument, which is of independent interest, showing that the gradient estimate directly implies two-sided bounds for solutions to the even capillary $L_p$-Christoffel-Minkowski problem. See also \cite{HI25b} for a more convex-geometric argument.

\begin{lemma}\label{lem:grad-base}
Let $\Sigma$ be an even, smooth, strictly convex $\theta$-capillary hypersurface with capillary support function $s$. Let
\[
\Omega:=\widehat{\Sigma}\cap e_{n+1}^\perp\cong\bbR^n
\]
and denote by $h:\bbS^{n-1}\to\bbR$ the (standard) support function of $\Omega$
(viewed as a convex body in $e_{n+1}^\perp$).
Assume that for some $\gamma\in(0,2)$ we have
\eq{\label{eq:cap-grad}
\frac{|\nabla s|^2}{s^\gamma}\le C_0,R^{2-\gamma},
\quad
R:=\max_{\cC_\theta}s.
}
Let $\wh{\nabla}=\nabla_{\bbS^{n-1}}$. Then $h$ satisfies
\eq{\label{eq:base-grad}
\frac{|\wh{\nabla}h|^2}{h^\gamma}
\le C_0\,(\sin\theta)^{\gamma}\,R^{2-\gamma} \quad  \text{on } \bbS^{n-1}.
}
\end{lemma}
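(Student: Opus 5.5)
The plan is to compare $h$ and $s$ along the boundary $\partial\cC_\theta$ and then restrict the bound \eqref{eq:cap-grad} to that boundary, discarding the normal component of $\nabla s$.

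\textbf{Step 1: parametrize $\partial\cC_\theta$.} The boundary is
\[
\partial\cC_\theta=\{\zeta=(\sin\theta\,y,0):\ y\in\bbS^{n-1}\},
\]
which is a round $(n-1)$-sphere of Euclidean radius $\sin\theta$. For $\zeta$ on it, the outer normal is $\nu=\zeta+\cos\theta\,e_{n+1}=(\sin\theta\,y,\cos\theta)$.

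\textbf{Step 2: identify $h$ with $s$ on the boundary.} The point $X=\ti\nu^{-1}(\zeta)$ lies on $\partial\Si$, since $\ti\nu$ is a diffeomorphism $\Si\to\cC_\theta$ mapping $\partial\Si$ onto $\partial\cC_\theta$. Also $\partial\Si=\partial\Omega$, viewed in $e_{n+1}^\perp$.

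Strict convexity of $\Si$ makes $\Omega$ a strictly convex body in $e_{n+1}^\perp$. Its outer unit normal at $X$ is the normalized projection of $\nu$ onto $e_{n+1}^\perp$, namely $y$. This is because the tangent space of $\partial\Si$ at $X$ is orthogonal both to $\nu$ and to $e_{n+1}$.

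Since $X_{n+1}=0$, we get
\[
s(\zeta)=\ip{X}{\nu}=\sin\theta\,\ip{X}{y}=\sin\theta\,h(y).
\]
Hence $h(y)=s(\sin\theta\,y,0)/\sin\theta$. Evenness of $\Si$ places the origin in the interior of $\Omega$, so $h>0$ and the left side of \eqref{eq:base-grad} makes sense.

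\textbf{Step 3: compare gradients.} The map $y\mapsto(\sin\theta\,y,0)$ scales lengths by exactly $\sin\theta$, because the round metric of $\cC_\theta$ induces on $\partial\cC_\theta$ the metric of a sphere of radius $\sin\theta$. For a unit vector $v\in T_y\bbS^{n-1}$, the chain rule gives
\[
\wh\nabla_v h=\frac{1}{\sin\theta}\,\sin\theta\,\nabla_{v}s=\nabla_{v}s,
\]
where on the right $v$ is regarded as a unit tangent vector to $\partial\cC_\theta$ at $\zeta$. Therefore $|\wh\nabla h|(y)=|\nabla^{T}s|(\zeta)\le|\nabla s|(\zeta)$, where $\nabla^T$ denotes the component tangent to $\partial\cC_\theta$.

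\textbf{Step 4: conclude.} Combining Step 3 with \eqref{eq:cap-grad} and then Step 2,
\[
|\wh\nabla h|^2\le C_0R^{2-\gamma}s^\gamma=C_0R^{2-\gamma}(\sin\theta)^\gamma h^\gamma,
\]
which is \eqref{eq:base-grad}.

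The only step needing care is Step 2: checking that the boundary of $\Si$ is mapped exactly onto $\partial\cC_\theta$, and that the normal of $\partial\Omega$ at $X$ is $y$ with the correct orientation. Everything else is direct computation.
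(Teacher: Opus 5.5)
Your proof is correct and follows the paper's route. Both arguments identify $h(y)=s(\sin\theta\,y)/\sin\theta$ and then bound $|\wh\nabla h|$ by the tangential part of $\nabla s$ along $\partial\cC_\theta$; the paper gets the identity through the standard support function $\hat s$ of $\wh\Si$, while you get it through the boundary point $X=\ti\nu^{-1}(\zeta)\in\partial\Si=\partial\Omega$. The orientation point you flag follows from convexity of $\wh\Si$: $\ip{x-X}{\nu}\le 0$ for all $x\in\Omega$, so $X$ maximizes $\ip{\cdot}{y}$ over $\Omega$, which gives $h(y)=\ip{X}{y}$ directly without computing the normal of $\partial\Omega$.
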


\begin{proof}
Let $\hat{s}:\bbS^n\to\bbR$ be the (standard) support function of $\widehat{\Sigma}$.
For $u\in\bbS^n_\theta$:
\eq{
s(\zeta)=\hat{s}(u)
\quad\text{whenever}\quad
u=\zeta+\cos\theta\,e_{n+1}
\, (\text{i.e.\ }\zeta=u-\cos\theta\,e_{n+1}\in\cC_\theta).
}
Moreover, for $\tilde u\in\bbS^{n-1}\cong \bbS^n\cap e_{n+1}^\perp$ one has
\eq{\label{eq:omega-support}
h(\tilde u)
=
\frac{1}{\sin\theta}\,\hat{s}(\sin\theta\,\tilde u+\cos\theta\,e_{n+1}).
}
Therefore,
\eq{\label{eq:omega-support-via-s}
h(\tilde u)
=
\frac{1}{\sin\theta}\,s(\sin\theta\,\tilde u), \quad
|\wh{\nabla}h(\tilde u)|
\le
|\nabla s(\sin\theta\,\tilde u)|,
}
and hence
\[
|\wh{\nabla}h|^2
\le
C_0\,R^{2-\gamma}\,(\sin\theta)^\gamma\,h^\gamma.
\]
\end{proof}

\begin{thm}
Let $1<p<k+1$, $\theta\in (0,\frac{\pi}{2})$ and $0<\gamma<\frac{2(p-1)}{k}$. Suppose $s$ is an even, smooth, strictly convex solution of
\eq{\label{equation pde}
\left\{
\begin{aligned}
 \sigma_k(\tau^\sharp[s])&=s^{p-1}\phi
    &&\text{in }\Cth,\\
  \nabla_\mu s &= \cot\theta\,s
    &&\text{on }\partial\Cth,
\end{aligned}
\right.
}
where $\sigma_k$ is the $k$-th elementary symmetric function of principal radii of curvature. Then there exists $C_1=C_1(\theta,k,p,\gamma,\phi)$ such that
\eq{
C_1^{-1}\leq s\leq C_1.
}
\end{thm} 
\begin{proof}
Let $\Sigma$ be the even, smooth, strictly convex, $\theta$-capillary hypersurface with capillary support function $s$.
Set $\Omega=\wh{\Si}\cap e_{n+1}^{\perp}$ and
\eq{
r_{\mathrm{out}}:=\max_{x'\in\Omega}|x'|,
\quad
r_{\mathrm{in}}:=\min_{x'\in\partial\Omega}|x'|,
\quad
H:=\max_{x\in \Sigma} x_{n+1},\quad R:=\max_{\Cth}s.
}

By \cite[Thm. 2.9]{HI25b} we have
\eq{
R \le r_{\mathrm{out}}+H
\le r_{\mathrm{out}}+\tan\theta\, r_{\mathrm{in}}
\le (1+\tan\theta) r_{\mathrm{out}}
=(1+\tan\theta) \max_{\mathbb{S}^{n-1}} h,
}
where $h$ denotes the standard support function of $\Omega$. Thus, by \autoref{lem:grad-base},
\eq{
\frac{|\wh{\nabla}h|^2}{h^\gamma}
\le C' (\sin\theta)^{\gamma} \left(\max_{\mathbb{S}^{n-1}} h\right)^{2-\gamma}
\qquad\text{on }\mathbb{S}^{n-1}.
}
Due to \cite[Lem. 3.1]{Gua22}, there exists $c=c(C',\gamma,\theta)>0$,
such that
\eq{
r_{\mathrm{out}}\le c\, r_{\mathrm{in}}.
}

Moreover, directly from \eqref{equation pde} (cf. \cite[Lem. 3.3]{HI25b}), it follows that there exists $c_0>0$ (depending only on $\theta$ and $\phi$) such that
\eq{
R\ge c_0.
}
Combining this with $r_{\mathrm{out}}\le c\,r_{\mathrm{in}}$  gives
\eq{
c_0\le R\le r_{\mathrm{out}}+H\le (c+\tan\theta)r_{\mathrm{in}}\Rightarrow
r_{\mathrm{in}}\ge \frac{c_0}{c+\tan\theta}  \text{ and } R\leq  (c+\tan\theta)r_{\mathrm{in}}.
}

On the other hand, the argument in \cite[Sec. 3.1]{HI25b} yields
\eq{
H\ge \frac{c_k^{1/k}}{2}\phi_1^{-1/k} R^{\frac{1-p}{k}} r_{\mathrm{in}}^2,
}
where $\phi_0\le \phi\le \phi_1$ and hence
\eq{
H\ge c_1 r_{\mathrm{in}}^{2+\frac{1-p}{k}}\ge c_2,
}
for constants $c_1,c_2>0$ depending only on $k,p,\theta,\phi$ and $c, c_0$.
In particular, we obtain the uniform lower bound
\eq{
s\ge H\cos\theta \ge c_2\cos\theta .
}

Finally, since $1<p<k+1$, the inequalities
\eq{
c_1 r_{\mathrm{in}}^{2+\frac{1-p}{k}}\le H\le \tan\theta\,r_{\mathrm{in}}
}
force an upper bound $r_{\mathrm{in}}\le c_3$ for some $c_3=c_3(c_1,\theta)>0$.
Therefore,
\eq{
s\le (c+\tan\theta)r_{\mathrm{in}}\le (c+\tan\theta)c_3.
}
\end{proof}

\section*{Acknowledgments}
Hu was supported by the National Key Research and Development Program of China (Grant No. 2021YFA1001800). Ivaki was supported by the Austrian Science Fund (FWF) under Project P36545.

\vspace{5mm}
\textsc{School of Mathematical Sciences, Beihang University,\\ Beijing 100191, China,\\}
\email{\href{mailto:huyingxiang@buaa.edu.cn}{huyingxiang@buaa.edu.cn}}

\vspace{5mm}
\textsc{Institut f\"{u}r Diskrete Mathematik und Geometrie,\\ Technische Universit\"{a}t Wien,\\ Wiedner Hauptstra{\ss}e 8-10, 1040 Wien, Austria,\\}
\email{\href{mailto:mohammad.ivaki@tuwien.ac.at}{mohammad.ivaki@tuwien.ac.at}}

\end{document}